\numberwithin{equation}{section} 
\renewcommand{\Bbb}{\mathbb} 
\newcommand{\cal}{\mathcal} 
\newcommand{\wt}[1]{\widetilde{#1}}
\renewcommand{\C}{{\Bbb C}} \newcommand{\N}{{\Bbb N}} 
\renewcommand{\P}{{\Bbb P}} \newcommand{\R}{{\Bbb R}} 
 \newcommand{\Z}{{\Bbb Z}}  
\newcommand{\cA}{{\cal A}}  
\newcommand{\vp}{\varphi} 
\newcommand{\ep}{\varepsilon}
\def\pabi#1/#2{\frac{\partial #1}{\partial x_{#2}}} 
\def\pab#1/#2{\frac{\partial #1}{\partial #2}}
\DeclareMathOperator{\Char}{char} 
\DeclareMathOperator{\Supp}{Supp} 
\DeclareMathOperator{\ini}{In}
\newtheorem{theorem}{Theorem}[section] 
\newtheorem{prop}[theorem]{Proposition} 
\newtheorem{lemma}[theorem]{Lemma} 
\newtheorem{cor}[theorem]{Corollary} 
\newtheorem{conj}[theorem]{Conjecture}
\theoremstyle{definition} 
\newtheorem{defn}[theorem]{Definition} 
\newtheorem{remark}[theorem]{Remark} 
\newtheorem{example}[theorem]{Example} 
\newtheorem*{acknowledgement}{Acknowledgement} 
\begin{document} 
 
\title{Conjectures on stably Newton degenerate singularities}  
 
\author{Jan Stevens} 
\address{Department of Mathematical Sciences, Chalmers University of  
Technology and University of Gothenburg.  
SE 412 96 Gothenburg, Sweden} 
\email{stevens@chalmers.se}

\begin{abstract} 
We discuss a problem of Arnold, whether every function is stably equivalent 
to one which is non-degenerate for its Newton diagram. We argue that the answer 
is negative.
We describe a method to make functions non-degenerate after 
stabilisation and give  examples of singularities 
where this method does not work. 
We  conjecture that they are in fact stably degenerate, that is
not   stably equivalent to non-degenerate 
functions.
  
We review the various non-degeneracy concepts in 
the literature. For finite characteristic we conjecture that there are no
wild vanishing cycles for non-degenerate singularities. This implies that
the simplest example of singularities with  finite Milnor number, 
$x^p+x^q$ in characteristic  
$p$, is not   stably equivalent to a non-degenerate 
function.

We argue that irreducible plane curves with an arbitrary number 
of Puiseux pairs (in characteristic zero) are stably non-degenerate. 
As the stabilisation involves 
many variables, it becomes very difficult to determine the Newton diagram 
in general, but the form of the equations indicates that the defining functions 
are non-degenerate. 
\end{abstract}

\maketitle

\section*{Introduction}\label{s1} 
Many invariants of a hypersurface 
singularity can be computed from its Newton diagram, 
if the singularity is non-degenerate.  Almost all 
singularities with a given diagram are non-degenerate, 
but a given function is degenerate for most choices of coordinates
and for most functions it is even impossible to find suitable coordinates 
in which the function is non-degenerate.
Sometimes this becomes possible after 
adding a quadratic form in new variables to the function.
Invariants computed from the Newton diagram 
of the new function allow conclusions about the original singularity. 
A successful case is the study of Luengo's example 
\cite{lu} of a non-smooth $\mu$-const stratum in \cite{st}. 
Attention to the fact that  a singularity can be made non-degenerate by a 
coordinate transformation after adding 
variables was drawn by Arnold, who raised the  
question whether this is always possible. 
 
Problem 3 of Arnold's list \cite{arn}  in the Arcata volume 
(the Russian version of the problem is older, see problems
1975-3 and 1976-8 in \cite{AP}) reads:
\begin{quote}\it 
Is every function stably equivalent to a $\Gamma$-non-degenerate function 
(in a neighbourhood of a critical point of finite multiplicity)? 
\end{quote}  

Function germs are $R$-equivalent (or shortly equivalent) if they 
can be turned into each other under the action of invertible
coordinate changes, and \textit{stably equivalent} if they become
equivalent after the addition of non-degenerate quadratic forms in
additional variables \cite[11.1]{AGV-I}.
The function $f(x_0,\dots,x_n)+Q(y_0,\dots,y_m)$
with $Q$ a  non-degenerate quadratic form is called a 
\textit{stabilisation}  of the function $f(x_0,\dots,x_n)$.

In this paper we argue that the answer to Arnold's question is negative. We call a function
which is not stably equivalent to a non-degenerate function shortly
for stably degenerate.

The Newton number $\nu(\Gamma)$ of a Newton diagram $\Gamma$ gives a
lower bound for the Milnor number $\mu(f)$
\cite{Kou} and for a non-degenerate function $f$ the 
equality $\mu(f)=\nu(\Gamma(f))$ holds. This equality is a necessary
and sufficient condition for a weaker non-degeneracy condition,
defined by Mondal \cite{Mo-bk}.  His 
\textit{partially non-degeneracy} condition does not involve the partial derivatives 
of initial forms, but initial forms of the partial derivatives.
Another condition (NPND$^*$) was introduced by Wall \cite{Wa}, who 
wanted a condition  sufficient for the
principal results of the theory, and wide enough to include all weighted homogeneous
functions with isolated singularity.
Following the terminology of \cite{BGM} we call it \textit{inner non-degeneracy}.
Conjecturally Mondal's and Wall's conditions are equivalent in
characteristic zero. 

In finite characteristic one can make the same definitions,
but the results are weaker. It is no longer true that the generic function
with a given diagram is non-degenerate. This is related to the occurrence of wild vanishing cycles, see  \cite{SGA}.
We conjecture that these do not appear for non-degenerate singularities. 
This implies that
the simplest example of singularities with  finite Milnor number, 
$x^p+x^q$ in characteristic  
$p$, is not   stably equivalent to a non-degenerate 
function.

This negative answer does not 
extend to the case of real or complex functions. 
I found a number of successful cases, using basically only one trick, which 
however carries a long way. By lack of counterexamples I  expected that every 
function could be made non-degenerate. The first indication that this is not 
true came by considering deformations on the $\mu$-const stratum in Luengo's 
example. A closer analysis led to simpler examples.
The easiest example (see Example \ref{ex23})
is the singularity  
\[ 
f_{23}=x^5+xy^3+z^3-3x^2yz+ x^4y\;.
\]
We conjecture that in fact  $\mu(f)=\mu(\tilde f)>\nu(\Gamma(\tilde f))$
for every $\tilde f$, stably equivalent to $f_{23}$. 
This   implies degeneracy for
all three concepts. We present Luengo's example and other examples,
both stably non-degenerate and conjecturally  stably degenerate.
In particular, we conjecture that there are stably degenerate and
stably non-degenerate functions on the same $\mu$-constant stratum.
This means that simple and less simple topological invariants do not
discriminate between stably degenerate and non-degenerate singularities.

The main reason  to conjecture that our examples are
stably degenerate is that 
our methods do not work in these cases. We describe why they have
to fail. This does not 
exclude the possibility that some unknown, complicated transformation 
makes the function non-degenerate after stabilisation.

In the last section evidence is presented that  every 
irreducible plane curve singularity (with  an arbitrary 
number of Puiseux pairs) is stably equivalent to a non-degenerate 
singularity. 
The number of 
variables is rapidly increasing, making it difficult to 
determine the Newton diagram and check non-degeneracy,
but the form of the equations indicates that the defining functions 
are non-degenerate.

\begin{acknowledgement} 
I thank Claus Hertling for asking the decisive question about non-degeneracy 
of singularities on a non-smooth $\mu$-const stratum. 
\end{acknowledgement} 
 
\section{Non-degenerate functions}\label{s2} 
We recall the standard definition of non-degeneracy, given by  
Kouchnirenko \cite{Kou}, and the related concepts of Wall \cite{Wa}
and Mondal \cite{Mo-bk}. 

\subsection{The Newton diagram}
Let $f\in k[[x_1,\dots,x_n]]$ be a formal power series over  
a field $k$, with  algebraic closure $K$. Write (in multi-index notation) $f=\sum a_mx^m$. The support of $f$ is  $\Supp(f)=\{m\in\N^n\mid a_m\neq0\}$
(note that $0\in \N$). We will assume that $f(0)=0$, so $0\notin \Supp(f)$ 
(otherwise one defines the reduced support by removing the origin \cite[6.2.1]{AGV}).
A Newton diagram $\Gamma(\cA)$ can be defined for an arbitrary subset  $\cA$
of $\N^n$ not containing the origin.  
The Newton diagram $\Gamma(f)$ of $f$ is then the Newton diagram of its support. 
The \textit{Newton polyhedron} $\Gamma_+(\cA)$ is the convex hull of the set 
$\bigcup_{m\in \cA}(m+\R_+^n) \subset \R^n$.
The \textit{Newton diagram} $\Gamma(\cA)$ of $\cA$ is the union 
of all compact faces of $\Gamma_+(\cA)$. The union $\Gamma_-(\cA)$ 
of all segments connecting the origin and the Newton diagram is the \textit{Newton polytope}.  See Figure \ref{fig-1} for an example.

A set $\cA$ is  \textit{convenient} if it contains a point on each coordinate axis.
A series $f$ is convenient if its  support is convenient, that is 
if for every $1\leq i \leq n$ there is a $m_i$ such that the  
monomial $x_i^{m_i}$ occurs with non-zero coefficient. 
In such cases also the Newton diagram is called convenient.

\begin{figure}
\begin{tikzpicture}[punt/.style={circle,fill=blue,inner sep=2pt}]
\draw (1.5,1.5) node {$\Gamma_-(f)$} (5.5,3.5) node {$\Gamma_+(f)$};
\draw[help lines] (0,0) grid (7.5,4.5);
\draw[thick, ->] (0,0) -- (7.6,0) node[right] {$x$}; 
\draw[thick, ->] (0,0) -- (0,4.6) node[above] {$y$};
\foreach \x in {1,...,7} \draw[shift={(\x,0)}]  node[below] {$\scriptstyle\x$};
\foreach \y in {1,2,3,4} \draw[shift={(0,\y)}]  node[left] {$\scriptstyle \y$};
\draw (0,0) node[below left] {$\scriptstyle 0$};
\draw[blue,thick]
node[punt] at (0,4)  {}
(0,4) --node[punt]{} (6,0)  node[punt]{}
node [punt] at (5,1) {}  node [punt] at (7,0) {} ;
\fill[color=gray, opacity=.3] (0,4.5) -- (0,4) -- (6,0) -- (7.5,0) -- (7.5,4.5) ;
\end{tikzpicture}
\caption{The Newton polyhedron $\Gamma_+(f)$ of $f=(y^2-x^3)^2-4x^5y+x^7$.}
\label{fig-1}
\end{figure}
 
Given $f=\sum a_mx^m$ and a
subset $S\subset \R^n$ (e.g., a face $\Delta$ of $\Gamma(f)$)
we denote  by $f_S$ 
the series $\sum_{m\in S}a_mx^m$.  
The \textit{principal part} of $f$  is the polynomial  
$f_{\Gamma}=\sum_{m\in\Gamma(f)}a_mx^m$.
 The classical concept of non-degeneracy is treated by Kouchnirenko \cite{Kou}. 
\begin{defn}\label{def-nondeg} 
The  series $f$ is \textit{non-degenerate} if for every closed 
face $\Delta\subset \Gamma(f)$ the polynomials 
\[ 
x_1\pabi f_\Delta/1,\dots, x_n\pabi f_\Delta/n 
\] 
have no common zero on the torus $(K^*)^n$. 
\end{defn}

\begin{figure}
\begin{tikzpicture}[punt/.style={circle,fill=blue,fill opacity=1,inner sep=2pt}]
\draw[help lines] (0,0) grid (7.5,2.5);
\draw[thick, ->] (0,0,0) -- (7.6,0,0) node[right] {$x$}; 
\draw[thick, ->] (0,0,0) -- (0,2.6,0) node[above] {$y$};
\draw[thick, ->] (0,0,0) -- (0,0,2.6) node[below left] {$z$};
\foreach \x in {1,...,7} \draw[shift={(\x,0)}, help lines] node[above right, black] {$\scriptstyle\x$} (0,0) -- (-.96,-.96) ;
\foreach \y in {1,2} \draw[shift={(0,\y,0)}, help lines]  node[above right,black] {$\scriptstyle \y$} (0,0) -- (-.96,-.96);
\foreach \z in {1,2} \draw[shift={(0,0,\z)}, help lines]  node[above, black] at (\z * 0.05,0.1,0) {$\ \scriptstyle \z$} (0,0) -- (7.5,0) (0,0) -- (0,2.5);
\draw (0,0,0) node[above right] {$\scriptstyle 0$};
\filldraw[blue,thick, fill opacity=.3]
node[punt] at (0,2,1)  {}
(0,2,1) -- (0,0,2) node[punt]{} -- (3,0,1) node[punt]{} -- (7,0,0)   node[punt]{}
 --  (5,1,0)   node[punt]{} -- (0,2,1) -- (3,0,1) -- (5,1,0);
\end{tikzpicture}
\caption{The Newton diagram $\Gamma(\tilde f)$ of $\tilde f=-z^2+2z(y^2-x^3)-4x^5y+x^7$.}
\label{fig-2}
\end{figure}

\begin{example}\label{char13}
The function $f=(y^2-x^3)^2-4x^5y+x^7$ is degenerate. Its Newton diagram
$\Gamma(f)$ can be seen in Figure \ref{fig-1} as the line between
the Newton polytope
$\Gamma_-(f)$ and  the
Newton polyhedron $\Gamma_+(f)$. If $\Char k\neq 2$, the function $f$ is stably equivalent to the 
non-degenerate function (provided $\Char k \neq 3,13$)
$\tilde f=-\tilde z^2+(y^2-x^3)^2-4x^5y+x^7=
-z^2+2z(y^2-x^3)-4x^5y+x^7$ (where $f-\tilde z^2$ is a stabilisation and 
$\tilde z= z-y^2+x^3$ a coordinate transformation) with Newton diagram as shown in
Figure \ref{fig-2}. The function $f$ is convenient, but   $\tilde f$ is not.
\end{example}

\subsection{Milnor and Newton number}
If $f$ is non-degenerate, many invariants can be computed  
from the Newton diagram. We concentrate here on the 
\textit{Milnor number}  
\[
\mu(f)=\dim_k k[[x_1,\dots,x_n]]\big/\big(\textstyle \pabi f/1,\dots, \pabi f/n\big)\;.
\] 
Note  that $\mu(f)$ can be infinite.

For any compact polytope $S$ in $\R_+^n$ with the origin as 
vertex we denote by $V_k(S)$ the sum of the  $k$-dimensional volumes of the intersections of $S$ with the $k$-dimensional 
coordinate subspaces of  $\R^n$, and we define 
following Kouchnirenko its \textit{Newton number} to be 
\[ 
\nu(S) =\sum_{k=0}^n(-1)^{n-k}k!V_k(S)\;. 
\]

\begin{defn}
The \textit{Newton number} $\nu(f)$ of a convenient series 
$f$ is the Newton number of its Newton polytope
$\Gamma_-(f)$.  For  a non convenient 
series $\nu(f):=\sup_{m\in\N}  \nu(f+\sum x_i^{m})$.
\end{defn} 
 Likewise one can define the Newton number $\nu(\cA)$ of a set $\cA$; it is in fact the common value of $\nu(f)$ for all $f$ with $\Supp(f)=\cA$.
 
The main result of Kouchnirenko  \cite{Kou} is: 
 
\begin{theorem}\label{kouchthm} 
For every series $f\in k[[x_1,\dots,x_n]]$ one has $\mu(f)\geq \nu(f)$. Equality 
holds if $f$ is convenient and  non-degenerate.  
If $\Char k=0$, then equality holds also for non-degenerate series which are
not convenient. Moreover, then almost all series with given Newton diagram
are non-degenerate.
\end{theorem}  

Kouchnirenko proves that in characteristic zero the set of degenerate
principal parts is a proper algebraic subset in the variety of all principal
parts corresponding to a given Newton diagram \cite[Th\'eor\`eme I (iii)]{Kou}.
Furthermore, given any subset $\cA\subset \N^n\setminus\{0\}$,
with $\nu(\cA)<\infty$ there exist a non-degenerate series $f$ with
$\Supp(f)=\cA$ \cite[1.13 Remarque (i)]{Kou}, see also \cite{Kou2}
where a combinatorial criterion on $\cA$ for $\nu(\cA)<\infty$ is given.

For nonisolated singularities  the meaning of  $\nu(\Gamma_-(f))$ is 
in the complex case given by  
a theorem of Varchenko \cite{Va}, conjectured by Kouchnirenko \cite{Kou}. 
 
\begin{theorem} 
For a non-degenerate series $f\in \C\{x_1,\dots,x_n\}$ the 
Newton number $\nu(\Gamma_-(f))$ is equal to $(-1)^{n-1}(\chi(F)-1)$, 
where $\chi(F)$ is the Euler characteristic of the Milnor fibre. 
\end{theorem}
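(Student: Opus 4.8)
The plan is to follow Varchenko's strategy: resolve $f^{-1}(0)$ by a toric modification adapted to the Newton polytope, read off $\chi(F)$ from the resolution via A'Campo's formula, and reduce the asserted identity to a combinatorial statement about the polytope $\Gamma_-(f)$. Both quantities depend only on the principal part: $\nu(f)$ by definition, and the Milnor fibre of a non-degenerate $f$ because the toric modification below is determined by $\Gamma(f)$, while the remaining terms of $f$ only perturb the strict transform away from the part of $\pi^{-1}(0)$ that enters the formula. So we may assume $f=f_\Gamma$ is a polynomial. If $f$ is moreover convenient the statement is immediate: then $\mu(f)=\nu(f)<\infty$ by Theorem~\ref{kouchthm}, so $f$ has an isolated critical point and $\chi(F)=1+(-1)^{n-1}\mu(f)$ by Milnor's theorem. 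The point of the whole argument is that this shortcut fails as soon as $f$ is not convenient --- $\mu(f)$ may be infinite and $F$ need not be a bouquet of spheres --- so the resolution must be carried out in general.

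Let $\ell(v)=\min_{m\in\Gamma(f)}\langle v,m\rangle$ be the support function of the Newton polytope on $\R_+^n$, and choose a smooth simplicial fan $\Sigma$ subdividing $\R_+^n$ that refines the decomposition into maximal cones on which $\ell$ is linear. The associated toric variety $X_\Sigma$ is smooth and comes with a proper birational morphism $\pi\colon X_\Sigma\to\C^n$, an isomorphism over the torus. For a ray $\rho$ of $\Sigma$ with primitive generator $v_\rho$ set $N(\rho)=\ell(v_\rho)$ and let $\Delta(\rho)\subseteq\Gamma(f)$ be the face on which this minimum is attained. In each affine chart $\pi^*f$ is a monomial times a polynomial whose restriction to the toric coordinate hyperplane indexed by $\rho$ is, in suitable coordinates, the face polynomial $f_{\Delta(\rho)}$. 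Non-degeneracy says exactly that each $f_\Delta$ defines a nonsingular hypersurface of the torus, which forces $\operatorname{div}(\pi^*f)$ to be a simple normal crossings divisor in a neighbourhood of $\pi^{-1}(0)$; its divisorial components contained in $\pi^{-1}(0)$ are the toric divisors $D_\rho$ for the rays $\rho$ interior to $\R_+^n$, occurring with multiplicity $N(\rho)\geq 1$.

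By A'Campo's formula, $\chi(F)=\sum_j m_j\,\chi(E_j^{\circ})$, summed over the divisorial components $E_j$ of $\pi^{-1}(0)$, where $m_j$ is the multiplicity of $E_j$ in $\pi^*f$ and $E_j^{\circ}$ is $E_j$ with all other components of $\pi^*(f^{-1}(0))$ removed. Here the only such $E_j$ are the $D_\rho$ ($\rho$ interior), and $D_\rho^{\circ}=O(\rho)\setminus\{f_{\Delta(\rho)}=0\}\cong(\C^*)^{n-1}\setminus\{f_{\Delta(\rho)}=0\}$ (the strict transform of $f^{-1}(0)$ and of the coordinate hyperplanes meet $\pi^{-1}(0)$ only along the other $D_{\rho'}$, hence disappear from $D_\rho^{\circ}$). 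Since $\chi((\C^*)^{n-1})=0$ we get $\chi(D_\rho^{\circ})=-\chi(\{f_{\Delta(\rho)}=0\}\cap(\C^*)^{n-1})$, and by Khovanskii's formula for nonsingular torus hypersurfaces this is a signed normalized volume of the face $\Delta(\rho)$. Thus $\chi(F)$ is expressed entirely in terms of the faces of $\Gamma(f)$ and the numbers $N(\rho)$.

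It remains to identify $\sum_{\rho}N(\rho)\,\chi(D_\rho^{\circ})$ with $(-1)^{n-1}\nu(f)+1$, and this combinatorial identity is the heart of the matter and the step I expect to be the main obstacle. One regroups the sum according to the faces of $\Gamma(f)$ (the interior rays selecting a fixed face $\Delta$ sweep out the relative interior of its normal cone), so that the top-dimensional faces contribute, up to sign, $N(\rho_\Delta)\cdot\operatorname{Vol}_{n-1}(\Delta)$ --- which is $n$ times the volume of the pyramid over $\Delta$ with apex the origin --- while the lower-dimensional faces produce precisely the correction terms recording the intersections of $\Gamma_-(f)$ with the coordinate subspaces. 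Collecting these contributions reassembles Kouchnirenko's alternating sum $\sum_k(-1)^{n-k}k!\,V_k(\Gamma_-(f))$, with one extra $+1$ coming from the vertex $\{0\}$. The laborious parts are the lattice-volume bookkeeping through the subdivision, the treatment of the coordinate subspaces --- in particular the ``missing'' axes when $f$ is not convenient, where $\Gamma_-(f)$ does not reach the boundary --- and the sign accounting; this is essentially Kouchnirenko's volume computation read off from the resolution.
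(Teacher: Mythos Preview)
The paper does not prove this statement: it is quoted as a theorem of Varchenko with a reference to \cite{Va}, so there is no ``paper's own proof'' to compare against. Your sketch is essentially Varchenko's original argument --- toric modification governed by the Newton fan, A'Campo's formula to read off $\chi(F)$ from multiplicities and strata, Khovanskii's volume formula for the Euler characteristic of each nonsingular torus hypersurface $\{f_{\Delta}=0\}$, and the final combinatorial reassembly into Kouchnirenko's alternating sum. As a roadmap it is correct, and your honest flagging of the combinatorial step as the main obstacle is accurate; Varchenko in fact organises that step through the zeta function (and then takes its degree), but the content is the same bookkeeping you describe.

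One small correction to the sketch: the parenthetical justifying $D_\rho^{\circ}=O(\rho)\setminus\{f_{\Delta(\rho)}=0\}$ is misleading. The strict transform of $f^{-1}(0)$ \emph{does} meet the open orbit $O(\rho)$, precisely along $\{f_{\Delta(\rho)}=0\}$, and that is what you are removing; it is only the strict transforms of the coordinate hyperplanes that lie entirely among the boundary toric divisors and hence contribute nothing further on $O(\rho)$. The displayed formula is right, but the explanation in parentheses should be adjusted.
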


The converse of  Theorem \ref{kouchthm} does not hold in general: 
for degenerate series it can be that $\mu(f)= \nu(f)$. 

\begin{example}\label{muisnu}
The simplest example 
is the function $(z+x)^2+xy+y^2$ \cite[Remarque 1.21]{Kou}.
More generally, one can start from any homogeneous isolated curve singularity of the form $yf(x,y)$ of degree $d$ and make a suspension $z^d+yf(x,y)$.
A simple linear coordinate transformation gives the degenerate function 
$g=(z+x)^d+yf(x,y)$, but $\mu(g)=\nu(g)=(d-1)^3$.
\end{example}

\subsection{Inner and partially non-degenerate functions}
The function in the above example is in fact non-degenerate in the sense of
Wall \cite{Wa} and of Mondal \cite{Mo-bk}. As their definitions are
given for algebraically closed fields, we assume from now on for simplicity
that the coefficient field is algebraically closed; the definitions to follow can
easily be extended by taking coefficients in a smaller field $k$, but zeroes
over an algebraic closure $K$.
But first 
we need some more notation and terminology.

The exponents $m$ of monomials lie in $\N^n\subset \R^n_+$. On $\R^n$
we take  coordinates $r=(r_1,\dots,r_n)$ . Let $w=(w_1,\dots,w_n)$ be a
system of \textit{positive} (rational) weights on the variables $x_i$.
We consider $w$ as element in the dual space $(\R^n)^*$. So it defines
a linear function $\lambda\colon r\mapsto \langle w,r\rangle$ on $\R^n$, and a valuation
on $K[[x_1,\dots,x_n]]$ by $w(f)=\min\{\langle w,m\rangle \mid
m\in \Supp(f)\}$. For a subset $\cA\subset \R^n_+$ we set 
$w(\cA)=\min\{\langle w,r\rangle \mid r\in \cA\}$. 
The \textit{initial set} $\ini_w(\cA)$ of $\cA$ is the set $\{r\in \cA \mid 
\langle w,r\rangle=w(\cA)\}$; for a convex 
polytope  it is also called minimising face. For a series $f=\sum a_mx^m \in 
K[[x_1,\dots,x_n]]$ the \textit{initial form} $\ini_w(f)$ is
$f_{\ini_w(\Supp(f))}$, that is $\ini_w(f)=\sum_{m\colon \langle w,m\rangle=w(f)}a_mx^m $.

A (finite)  set of linear functions $\lambda_j$ given by a set of weights
$\{ w^{(j)}\mid j\in J\}$ has a minimum  
$\lambda_J\colon r\mapsto\min_{j\in J} \lambda_j(r)
= \min_{j\in J}\langle w^{(j)},r\rangle$. We suppose the set 
to be irredundant, in that no proper subset has the same minimum. It defines a 
diagram $\Gamma=\{r\in (\R_+)^n \mid \lambda_J(r)=1\}$.
The faces $\Delta_j=\{r\in (\R_+)^n \mid \lambda_j(r)=\lambda_J(r)=1\}$
are non-empty and $(n-1)$-dimensional.
Conversely, given a diagram  $\Gamma$ such that 
the closed region $\Gamma_+$ on and above it is convex  
and central projection 
onto the unit simplex is a bijection,  each facet $\Delta$
defines a unique linear function $\lambda_\Delta$ such that $\lambda_\Delta(r)=1$
for all $r\in \Delta$, that is, 
there is a uniquely defined system of weights  $w_\Delta$ such that all
points $r\in \Delta$ satisfy $\langle w_\Delta,r\rangle=1$. The collection of these linear functions defines a convenient diagram
$\Gamma$ as above. 

\begin{defn}
A convenient and convex diagram $\Gamma$ defined (as above) by
a finite set of positive weights is called a  \textit{$C$-diagram}.
\end{defn}

For an arbitrary subset $I\subset \{1,\dots,n\}$ we denote 
the coordinate subspace $\{(r_1,\dots,r_n)\in \R^n\mid r_i=0 
\text{ if } i\notin I\}$ by $\R^I$. For $K^n$ we use  a similar
notation, so $K^I=\{(x_1,\dots,x_n)\in K^n\mid x_i=0 
\text{ if } i\notin I\}$. Furthermore we put $(K^*)^I= 
(K^*)^n\cap K^I$.
Let $Q=(q_1,\dots,q_n)\in K^n$ be a point. 
We set $I_Q=\{i\mid q_i\neq 0\}$. Then
$\R^{I_Q}=\{(r_1,\dots,r_n)\in \R^n\mid r_i=0 
\text{ if } q_i=0\}$.

Note that Kouchnirenko's  non-degeneracy condition depends only on the principal part of the series $f$.  As the condition 
in Definition \ref{def-nondeg} only involves zeroes on $(K^*)^n$ of the ideal
$(x_1\pabi f_\Delta/1,\dots, x_n\pabi f_\Delta/n )$ for $\Delta$ a closed face of $\Gamma(f)$, one 
can as well require that the ideal
$(\pabi f_\Delta/1,\dots, \pabi f_\Delta/n)$ has no  zero 
on $(K^*)^n$.  
We first reformulate the definition following \cite{Mo-bk}. 
 \begin{defn} 
The  series $f$ is \textit{non-degenerate} if for every  system of positive
weights $w$ the ideal 
\[ 
\left(\pabi \ini_w(f)/1,\dots, \pabi \ini_w(f)/n\right) 
\] 
has no  zero on the torus $(K^*)^n$. 
\end{defn} 

Mondal's non-degeneracy condition 
does not involve the partial derivatives of initial forms, but 
initial forms of the partial derivatives.
\begin{defn}[\cite{Mo-bk}] 
A series $f$ with $f(0)=0$ is \textit{partially non-degenerate} if 
for every non-empty subset $I$ of $\{1,\dots,n \}$
and any  system of positive
weights $w$ on the $x_i$ with $i\in I$ the ideal 
\[ 
\left(\ini_w\Big(\pabi f/1\Big|_{K^I}\Big),\dots, \ini_w\Big(\pabi f/n\Big|_{K^I}\Big) \right) 
\] 
has no  zero on the torus $(K^*)^n$. 
\end{defn} 

This condition involves also terms of the series $f$ different from the
principal part. As example, consider functions $f(x,y)$ with principal
part $f_\Gamma=x^a+y^b$, where $\gcd(a,b)=1$. Then $\ini_w(\pab f_\Gamma/x)
=a x^{a-1}$ for all $w$,
but in general $\ini_w(\pab f/x)$ contains terms involving the variable $y$.

Wall's non-degeneracy condition  is stronger than Kouchnirenko's,
but will be required for less faces.  It starts 
 from a  $C$-diagram $\Gamma$, for which 
the intersection points with the 
coordinate axes need not be lattice points.

\begin{defn} 
A face $\Delta$  is an 
\textit{inner face} of a $C$-diagram $\Gamma$ if it is not contained in any coordinate hyperplane.  
\end{defn}

\begin{defn} 
Let $f$ be a series whose support has no points below the
$C$-diagram $\Gamma$.
The series $f$ is \textit{inner non-degenerate}  
with respect to  $\Gamma$ if for every \textit{inner} 
face $\Delta$ the following holds: 
$\Delta\cap \R^{I_Q}=\emptyset$ 
for each common zero $Q$ of the ideal $(\pabi f_\Delta/1,\dots, \pabi f_\Delta/n)$. 
\end{defn} 

We say that $f$ is inner non-degenerate if there exists a $C$-diagram $\Gamma$
with respect to which $f$ is  inner non-degenerate. Wall calls his condition
NPND$^*$ \cite{Wa}; we follow the terminology of \cite{BGM}, where
the concept is extended to finite characteristic. The condition depends on the
diagram $\Gamma$, and it is not quite clear how it is related to the 
Newton diagram $\Gamma(f)$ of $f$.
The case  $n=2$ is easy to analyse; this is done by Wall \cite{Wa}.
A detailed study of the possible shape of Newton diagrams in $\R^3$
is made by Oleksik \cite{Ol} in connection with the computation of \L ojasiewicz exponents.
He defines an \textit{exceptional face} $\Delta$ of $\Gamma(f)\subset \R^n$ 
as a facet with  one of its vertices  at a distance 1 to a coordinate axis,
while  the remaining vertices define an $(n - 2)$-dimensional face
in one of the coordinate hyperplanes through that axis. A 
combinatorial characterisation of Newton polyhedra 
$\Gamma_+\subset\Gamma'_+$
in $ \R_+^3$  
with $\nu(\Gamma_-)=\nu(\Gamma'_-)$
is given by 
Brzostowski, Krasi\'{n}ski  and  Walewska \cite{BKW}.

If $\dim\Gamma(f)=n-1$ and not convenient one obtains a convenient
diagram by taking the diagram determined by the linear functions
$\lambda_\Delta$ for all facets of $\Gamma(f)$. Here one can leave out 
the exceptional faces. It is not clear from the definition, but we conjecture
that in characteristic zero, if $f$ is inner non-degenerate with respect to
a $C$-diagram $\Gamma$,  and $\Gamma'$ is a 
$C$-diagram with $\Supp(f)\subset \Gamma'_+$ with the same Newton number,
then $f$ is also    inner non-degenerate with respect to $\Gamma'$. 
 
\begin{example}[Example \ref{char13} continued]
The two systems of weights $(\frac 16,\frac14,\frac12)$ and 
$(\frac2{13},\frac3{13},\frac7{13})$ define a $C$-diagram $\Gamma$,
shown in Figure \ref{fig-3}. The function
 $\tilde f=-z^2+2z(y^2-x^3)-4x^5y+x^7$ is inner non-degenerate with respect to 
 $\Gamma$ (if $\Char K$ is not 2, 3 or 13). There are only three inner faces.
 The (reduced) singular set of  $f_\Delta=2z(y^2-x^3)-4x^5y$ 
is the $z$-axis and the face $\Delta$ does not touch this axis.  
\end{example}
 
\begin{figure}
\begin{tikzpicture}[punt/.style={circle,fill=blue,fill opacity=1,inner sep=2pt}]
\draw[help lines] (0,0) grid (7.5,4.7);
\draw[thick, ->] (0,0,0) -- (7.6,0,0) node[right] {$x$}; 
\draw[thick, ->] (0,0,0) -- (0,4.8,0) node[above] {$y$};
\draw[thick, ->] (0,0,0) -- (0,0,2.6) node[below left] {$z$};
\foreach \x in {1,...,7} \draw[shift={(\x,0)}, help lines] node[above right, black] {$\scriptstyle\x$} (0,0) -- (-.96,-.96) ;
\foreach \y in {1,...,4} \draw[shift={(0,\y,0)}, help lines]  node[above right,black] {$\scriptstyle \y$} (0,0) -- (-.96,-.96);
\foreach \z in {1,2} \draw[shift={(0,0,\z)}, help lines]  node[above, black] at (\z * 0.05,0.1,0) {$\ \scriptstyle \z$} (0,0) -- (7.5,0) (0,0) -- (0,4.8);
\draw (0,0,0) node[above right] {$\scriptstyle 0$} (7,0,0) node[punt]{};
\filldraw[blue,thick, fill opacity=.3]
node[punt] at (0,2,1)  {}
(0,2,1) -- (0,0,2) node[punt]{} -- (3,0,1) node[punt]{} -- (6.5,0,0)    
 --  (5,1,0)   node[punt]{} -- (0,4.33,0) -- (0,2,1) -- (3,0,1) ;
\end{tikzpicture}
\caption{A $C$-diagram for $\tilde f=-z^2+2z(y^2-x^3)-4x^5y+x^7$.}
\label{fig-3}
\end{figure}

\begin{example}[Example \ref{muisnu} continued]
Let  $yf(x,y)$ be  a homogeneous isolated curve singularity  of degree $d$ and
consider  the degenerate  function 
$g=(z+x)^d+yf(x,y)$. This function is inner non-degenerate with respect to
$\Gamma$ consisting of the triangle given by the weights $(\frac 1d, 
\frac 1d,\frac 1d)$. The triangle itself is the only inner face, and as $g$ has an
isolated singularity, the non-degeneracy condition is satisfied.

The function is also partially non-degenerate. 
Restricted to $y=0$ and
with weights $(1,1)$, or what
amounts to the same, weights $(\frac 1d,\frac 1d)$  the ideal of initial forms of 
the partial derivatives is generated by $(z+x)^{d-1}$ and $f(x,0)$. As 
$f(x,0)$ is a non-zero multiple of $x^{d-1}$, there are no zeroes
on $(K^*)^3$.
\end{example}

 
\subsection{Relations between the different conditions}
Kouchnirenko's non-degeneracy of a series $f$ does not imply that $f$ is inner or partial
non-degenerate, as non-isolated singularities also can be non-degenerate.
On the other hand, inner non-degenerate functions have 
finite Milnor number \cite{Wa, BGM}, and the same is true for
partially non-degenerate functions.
 
 \begin{prop}
If $f$ is partially non-degenerate, then the origin is an isolated
critical point of $f$, that is $\mu(f)<\infty$.
\end{prop}

\begin{proof}
Suppose that 
$\mu(f)=\infty$. Let $B$ be a branch of a curve contained in the zero set
$V(\pabi f_\Delta/1,\dots, \pabi f_\Delta/n)$.  
Let $I_B=\{i\mid x_i|_B\not\equiv 0\}$. The weights $w$ of an appropriate weighted
tangent cone to $B\subset K^{I_B}$ lead to initial forms violating the
non-degeneracy condition, cf.~\cite[Lemma X.17]{Mo-bk}.
\end{proof}

We have the following relations between the different
non-degeneracy conditions.
 
 \begin{prop}[{\cite[Proposition XII.6]{Mo-bk}}]
 If $f$ is non-degenerate and $\mu(f)<\infty$, then 
$f$ is partially non-degenerate.
\end{prop}
 
\begin{prop}[{\cite[Proposition XII.9]{Mo-bk}}]\label{innerispart}
An inner non-degenerate series is partially non-degenerate.
\end{prop}

We give here the proof of the easiest case, that $f$ is non-degenerate and convenient.
Let  $I\subset\{1,\dots,n \}$
and  $w$ a  system of positive (integral)
weights on the $x_i$ with $i\in I$ be given.
As $f$ is convenient, $\Gamma(f)\cap \R^I\neq\emptyset$.
We can extend $w$ to a system $w'$ of positive  (rational) weights on all $x_i$
such that $\ini_{w'}(\Gamma(f))\subset \R^I$. Then 
$\ini_{w'}(f)$ depends only  on the $x_i$ with $i\in I$. By non-degeneracy
the polynomials $\pabi \ini_{w'}(f)/i$, $i\in I$, have no common zero in
$(K^*)^n$. If for $i\in I$ the polynomial $\pabi \ini_{w'}(f)/i$
is not identically zero, then $\pabi \ini_{w'}(f)/i=\pabi \ini_{w}(f|_{K^I})/i=
\ini_w ( \pabi f/i|_{K^I})$. Those functions do not have a common zero,
so also not all $\ini_w ( \pabi f/i|_{K^I})$ with $i\in \{1,\dots,n \}$.

The other direction of the implication in Proposition \ref{innerispart} 
is not true in finite characteristic (the simplest example is $x^p+x^q$),
but in characteristic zero no counterexamples are
known. It is easy to see that for $n=2$ partial non-degeneracy implies
inner non-degeneracy, and Mondal gives a proof for $n=3$
\cite[XII.30]{Mo-bk}.

\begin{conj}
A partially non-degenerate series $f\in K[[x_1,\dots,x_n]]$, with $\Char K =0$,
is also inner non-degenerate.
\end{conj}

\subsection{Minimal Milnor number}
For series $g_1,\dots,g_n\in  K[[x_1,\dots,x_n]]$ 
the intersection multiplicity (at the origin) is 
\[
[g_1,\dots,g_n]_0=\dim_K K[[x_1,\dots,x_n]]/(g_1,\dots,g_n)\;.
\]
For a collection $(\Gamma_1,\dots,\Gamma_n)$ of $n$ diagrams in 
$\R^n$ define $[\Gamma_1,\dots,\Gamma_n]_0$ as the minimal intersection
multiplicity at the origin of series $g_1,\dots,g_n$ with 
the Newton diagram of $g_i$ on or above $\Gamma_i$. 
Given a subset $\cA \subset \N^n$ define $\partial_i\cA$ as the support of $\pabi f/i$
for any $f\in K[[x_1,\dots,x_n]]$ with $\Supp(f)=\cA$, that is 
\[
\partial_i\cA=\{m-e_j\mid m\in \cA, m-e_j\in \N^n, \text{ $p$ does not divide
           $m_j$}\}\;.
\]
We now state Mondal's main result on the generic Milnor number.
 \begin{theorem}[{\cite[Theorem XII.3]{Mo-bk}}]\label{mondalthm}
Suppose that the minimal intersection multiplicity $[\Gamma(\partial_1\cA),\dots,\Gamma(\partial_n\cA)]_0$ is finite.
For a series $f\in K[[x_1,\dots,x_n]]$ with support in $\cA$ and with
$\Gamma(\pabi f/j)=\Gamma(\partial_j\cA)$ for all $j$ one has
\[\mu(f)\geq [\Gamma(\partial_1\cA),\dots,\Gamma(\partial_n\cA)]_0\]
with equality
if and only if $f$ is partially non-degenerate.
If $\Char K=0$, then series realising equality exist.
\end{theorem}

In characteristic zero the minimal value $[\Gamma(\partial_1\cA),\dots,\Gamma(\partial_n\cA)]_0$  for the Milnor number 
is in fact equal to $\nu(\cA)$. This follows
from 
Kouchnirenko's result  mentioned after Theorem \ref{kouchthm}, that 
there exists a  non-degenerate function $f$ with support equal to $\cA$;
its Milnor number is $\nu(f)$.
It follows from Proposition \ref{innerispart} and Theorem \ref{mondalthm}
that in characteristic zero an inner non-degenerate function satisfies
$\mu(f)=\nu(f)$; the proof of \cite[Theorem 1.6]{Wa} is incomplete,
as it only shows that a non-degenerate, not convenient function $f$ is
right equivalent to a convenient function $f+\sum x_i^m$ with $m\gg0$
(this is \cite[Th\'eor\`eme 3.7 (i)]{Kou}
but Kouchnirenko does not show it in detail), but does not prove
$\mu(f)=\nu(f)$ for convenient degenerate inner non-degenerate functions
(as in Example \ref{muisnu}). Wall's argument does show that
$\nu(f)=\nu(\Gamma_-)$, if $f$ is inner non-degenerate with respect to 
the $C$-diagram $\Gamma$.

\begin{cor}
A series $f\in K[[x_1,\dots,x_n]]$ with $\mu(f)<\infty$ is (inner, partially) degenerate if there exists a series $g$ with $\Supp(g)=\Supp(f)$ and
lower Milnor number: $\mu(g)<\mu(f)$.
\end{cor}

This is easy to check, without determining the faces 
of the Newton diagram. One has  to compute, say with {\sc Singular} \cite{GPS},
$\mu(f)$
and $\mu(g)$ for a general enough function with the 
same support. Taking all 
coefficients equal to 1 might not be general enough; in my experience 
a good choice is to use the coefficients $1,2,3,\dots, k$, if there are $k$ monomials.

 
\section{Finite characteristic} 
\subsection{Weakly non-degenerate functions}
In finite characteristic it is no longer true that the 
Milnor number is invariant under contact equivalence. The simplest example is the  
function $f(x)=x^p$ in characteristic $p$ 
with $\mu(f)=\infty$, while  $\mu(g)=p$  
for the contact equivalent function 
$g(x)=(1+x)f(x)=x^p+x^{p+1}$. 
Recall that   $f, g \in K[[x_1,\dots,x_n]]$ are contact equivalent if there is an 
automorphism $\varphi \in \operatorname{Aut}(K[[x_1,\dots,x_n]])$
and a unit $u \in K[[x_1,\dots,x_n]]^*$ such that $f= u \cdot \varphi(g)$,  
see e.g. \cite[p. 62]{BGM}.

Some invariants depend only on the contact class, that is on the zero set 
of $f$. An example is the $\delta$-invariant  (the number of virtual double points) for plane curve
singularities. The question then arises under which conditions
such invariants can be computed from the Newton diagram. 
A face function $f_\Delta$ is quasi-homogeneous, but in finite 
characteristic it can be that $f_\Delta$ does not lie in the
ideal $(\pabi f_\Delta/1,\dots,\pabi f_\Delta/n)$: in characteristic zero one
can use Euler's identity, but  not if the characteristic divides the weighted degree.
The simplest example is again the polynomial $x^p$. This leads to the 
following definitions.

\begin{defn} 
The  series $f$ is \textit{weakly non-degenerate} if for every closed 
face $\Delta\subset \Gamma(f)$ the polynomials 
\[ 
f_\Delta, \pabi f_\Delta/1,\dots,\pabi f_\Delta/n 
\] 
have no common zero on the torus $(K^*)^n$. 
\end{defn}

\begin{defn} 
Let $f$ be a series whose support has no points below the
convenient diagram $\Gamma$.
The series $f$ is \textit{weakly inner non-degenerate}  
with respect to  $\Gamma$ if for every \textit{inner} 
face $\Delta$ the following holds: 
$\Delta\cap \R^{I_Q}=\emptyset$ 
for each common zero $Q$ of the ideal 
$(f_\Delta,\pabi f_\Delta/1,\dots, \pabi f_\Delta/n)$. 
\end{defn}

In fact, it is rather common in the literature on $p$-adic 
zeta-functions to add the function $f_\Delta$ in the definition of 
non-degeneracy, see e.g \cite{KH}. This is also the definition of
Beelen and Pellikaan \cite{PB} in the case $n=2$; they included convenience.
They refer to Kouchnirenko's definition as  non-degeneracy
in the strong sense. The term weakly non-degenerate is from
\cite{BGM}, where the condition is only asked for top-dimensional faces,
as a direct but nowhere used generalisation of the definition in
Beelen and Pellikaan \cite{PB};
as the
weak non-degeneracy condition is automatically satisfied for zero
dimensional faces,  it suffices in the case  $n=2$
to ask the condition for top-dimensional faces. 
Note that the function $f_\Delta$ is also added in Khovanskii's definition
of non-degenerate Laurent polynomials \cite{Kho}, called 
$0$-non-degenerate by Varchenko \cite{Va}.

\begin{example}[{\cite[Remark 3.15]{PB}}]
Consider $f=x^{p+1}+x^{p-1}y+xy^{p-1}+y^{p+1}$, or more
generally a function $f$ of the form $xyf_{p-2}(x,y)+f_{>p}(x,y)$, with
$xyf_{p-2}(x,y)$ a homogeneous polynomial of degree $p$ with $p$ distinct
factors, and $f_{>p}(x,y)$ a series with multiplicity at least $p+1$, making
the function convenient and the Milnor number finite. Then $f$ is 
(partially) degenerate: for $w=(1,1)$ we have $\ini_w(\pab f/x)=
\pab\ini_w(f)/x=\pab (xyf_{p-2})/x$ and similarly for the derivative w.r.t.~$y$.
As $x\pab g_p/x+y\pab g_p/y=0$ for  any  homogeneous 
polynomial $g_p$ of degree $p$ in $\Char p >0$, we get non-trivial solutions.
But $f$ is weakly non-degenerate, and in fact weakly inner 
non-degenerate with respect to the segment joining $(p,0)$ and $(0,p)$.
\end{example}

\begin{example}\label{char13-2}
The function  $\tilde f=-z^2+2z(y^2-x^3)-4x^5y+x^7$ of Example
\ref{char13}  (see Figure  \ref{fig-2}) degenerates in characteristic 13 on the 
facet $\Delta$ with vertices
$(3,0,1)$, $(0,2,1)$ and $(5,1,0)$. Indeed, as 
\[\begin{vmatrix}
3&0&5\\0&2&1\\1&1&0
\end{vmatrix}
=-13\;,\]
the polynomials $x\pab f_\Delta/x$, $y\pab f_\Delta/y$ and $z\pab f_\Delta/z$
are linearly dependent, whatever the coefficients of the monomials are.
The determinant in question occurs in the computation of the Newton number. 
\end{example}

\begin{example}
The polynomial 
\[
f(x,y,z)= x^py+y^pz + z^px
\]
is non-degenerate and inner non-degenerate.
For every value of $m>p$ the function
\[
f_m=  x^py+y^pz + z^px+x^m+y^m+z^m
\]
is still inner non-degenerate, but degenerate, also when $p \nmid m$.
The face $\delta$ with $f_{m,\delta}=x^py+y^m$ is not an inner face.
The ideal of partial derivatives is generated by 
$\pab f_{m,\delta}/y=x^p+my^{m-1}$.
It follows that $f_m$ is weakly non-degenerate, except when $m=kp+1$.
\end{example}

The previous example shows that the characteristic zero proof
of Kouchnirenko and Wall for the equality $\mu(f)=\nu(f)$ does not extend
to finite characteristic, contrary to what  
Boubakri, Greuel and Markwig claim \cite[Proof of Theorem 7]{BGM}.
The proof uses finite determinacy to conclude that $f$ is equivalent to
$f+\sum x_i^m$ for suitable large $m$ and the equality of Milnor and
Newton number for convenient non-degenerate series (Theorem \ref{kouchthm}).
For $n=2$ the argument does work in finite characteristic, as 
$f(x,y)+x^m+y^m$ is non-degenerate for suitable $m$, if $f(x,y)$ 
is non-degenerate.
\subsection{Conjectures}
For non-degenerate plane curve singularities one can compute the $\delta$-invariant
from the Newton diagram.
Over the complex numbers this is described in \cite[13.3.1]{AGV}:
it is the number $\sigma$ of subdiagrammatic monomials $x^m$, meaning that 
$m+(1,1)$ does not belong to the interior of the Newton polyhedron.
More generally, for $n>2$ the number of subdiagrammatic monomials 
gives the geometric genus of the singularity.
An elementary proof in all characteristics in the plane curve case is given
by Beelen and Pellikaan \cite{PB}.

\begin{prop}
If $f\in K[[x,y]]$ is weakly non-degenerate then $\delta(f)$ is 
equal to $\sigma(f)$, the number of subdiagrammatic monomials.
\end{prop}

As also the number $r$ of branches of $f$ is easily computed
from the Newton diagram,  this gives, as observed in \cite{BGM}:

\begin{theorem}
For a weakly non-degenerate $f\in K[[x,y]]$ one has $\nu(f)=2\delta - r +1$.
\end{theorem}

Observe that in general Milnor's formula $\mu=2\delta -r + 1$ 
does not hold in finite characteristic. The difference $\mu-(2\delta -r + 1)$
is the number of wild vanishing cycles
\cite[p. 265]{MW}. In particular, if $f$ is non-degenerate then there are no 
wild vanishing cycles. We conjecture that this holds in any dimension.
Greuel and Duc  write \cite[p. 579]{G-D}:
\lq Although we can compute the number of wild vanishing cycles, it seems hard
to understand them\rq.
The number of wild vanishing cycles was introduced by Deligne \cite{SGA}.
He defines sheaves of vanishing cycles $R^i\Phi_{\bar \eta}(\Z/\ell)$
(with $\ell\neq p$).  He  gets  the total number of vanishing cycles
as the sum of the  number of (ordinary) vanishing cycles and the
number of wild vanishing cycles.
In the equicharacteristic case Deligne proves that the Milnor number 
is equal to the total number  of vanishing cycles \cite[Expos\'e XVI]{SGA}.

\begin{conj}\label{nowild}
If $f\in K[[x_1,\dots,x_n]]$ is non-degenerate, then there are no 
wild vanishing cycles.
If $f$ is weakly non-degenerate, then $\mu(f)-\nu(f)$ is the number
of wild vanishing cycles.
\end{conj}

This conjecture implies 
\begin{conj} 
In characteristic 13 the function $f=(y^2-x^3)^2-4x^5y+x^7$
of Example \ref{char13} is stably degenerate. 
\end{conj} 

Even simpler examples are obtained from the function $f(x)=x^p$ in $\Char p$.
This function is weakly non-degenerate. As it has $\mu(f)=\infty$,
it can not be stably equivalent to an inner or partially non-degenerate function.
Arnold's problem asks for functions with finite multiplicity.
We take a function with the same zero locus: we consider 
$f_q(x)=x^p+x^q$ with $q>p$ and $p$ and $q$ coprime.
In this case $\mu(f_q)=q-1$, and $f_q$ is not (inner)  non-degenerate.
Conjecture \ref{nowild} implies 

\begin{conj} 
The function $f_q(x)=x^p+x^q$,  $\Char K=p$, $q>p$ and 
$\gcd(p,q)=1$ is stably degenerate. 
\end{conj} 

We note that $f_q(x)=x^p+x^q$ is partially non-degenerate. 
This is caused by  the monomial $x^q$ above the Newton diagram.

\section{Characteristic zero}

In this section we give examples of stably non-degenerate and
(conjecturally) degenerate singularities in the case $\Char K =0$. 
 
\subsection{The basic trick} 
Let $f$ be a (degenerate) function of the form $f=g+m\vp^k$, where 
$m$ is any function, but preferably a monomial. Then we can remove the term $m\vp^k$ 
after stabilisation with two new variables:
\begin{lemma}\label{trick} 
The function $f=g+m\vp^k$ is stably equivalent to $-uv+u\vp+mv^k+g$.  
\end{lemma}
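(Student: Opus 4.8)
The plan is to produce an explicit chain of formal coordinate changes (right equivalences) carrying the power series $-uv+u\vp+mv^k+g$ in the variables $x_1,\dots,x_n,u,v$ to $-uv+f$, where $f=g+m\vp^k$. Since $-uv$ is a non-degenerate quadratic form of rank two in the two new variables, this presents $-uv+u\vp+mv^k+g$ as a suspension of $f$, so that stable equivalence follows at once from the definition.

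First I would apply the translation $v\mapsto v+\vp$ in the variable $v$ alone; as $\vp$ lies in the maximal ideal of $k[[x_1,\dots,x_n]]$, this is an automorphism of $k[[x_1,\dots,x_n,u,v]]$, and it turns $-uv+u\vp+mv^k+g$ into $-uv+m(v+\vp)^k+g$, the terms $-u\vp$ and $+u\vp$ cancelling. Next I would write $m(v+\vp)^k=m\vp^k+v\,h$ with $h=\bigl(m(v+\vp)^k-m\vp^k\bigr)/v$, which is a genuine element of $k[[x_1,\dots,x_n,v]]$ because the numerator vanishes at $v=0$; expanding shows $h=mk\vp^{k-1}+O(v)$, so $h$ too lies in the maximal ideal. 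At this point the function reads $-uv+v\,h+\bigl(g+m\vp^k\bigr)=v\bigl(h-u\bigr)+f$. Finally the triangular substitution $u\mapsto u+h$, again an automorphism fixing the maximal ideal, converts $v(h-u)$ into $-uv$ and leaves precisely $-uv+f$, the desired suspension.

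The computation is elementary and valid in every characteristic, so I do not anticipate a real obstacle; the only points needing care are bookkeeping. One should check that both substitutions are well-defined automorphisms of $k[[x_1,\dots,x_n,u,v]]$ fixing the maximal ideal, which reduces to the observations that $\vp$ and $h$ lie in the appropriate maximal ideals and that the substitutions are triangular, so that no convergence question intervenes (we work formally throughout). One should also confirm that the quadratic part remaining at the end is $-uv$, of full rank two, so that the outcome is genuinely a suspension of $f$ and not a series of larger corank; this is transparent from the reduction just described.
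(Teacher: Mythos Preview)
Your proof is correct and follows essentially the same route as the paper's. The paper simply records the one-line identity
\[
-\Bigl(u-m\,\tfrac{v^k-\vp^k}{v-\vp}\Bigr)(v-\vp)+m\vp^k=-uv+u\vp+mv^k,
\]
which is exactly your two substitutions $v\mapsto v+\vp$ and $u\mapsto u+h$ composed into a single change of coordinates (your $h$ is precisely $m\,\tfrac{(v+\vp)^k-\vp^k}{v}$, i.e.\ the paper's difference quotient after the shift in $v$); your version merely unpacks this identity into two steps and adds the bookkeeping about the substitutions being local automorphisms.
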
 
\begin{proof} We stabilise $f$ with the quadratic form $-\tilde u \tilde v$ in two new
variables and compute the effect
of the coordinate transformation
 $\tilde u =u-m\frac{v^k-\vp^k}{v-\vp}$, $\tilde v = v-\vp$:
\[ 
-\left(u-m\frac{v^k-\vp^k}{v-\vp}\right)(v-\vp)+m\vp^k= 
-uv+u\vp+mv^k\;. 
\] 
\end{proof} 
This formula includes the special case $k=1$: one has that  
$g+m\vp$ is stably equivalent to $-uv+u\vp+vm+g$. 
We note also the case  $m=1$ and $k=2$, where 
we have $f=g+\vp^2$. The basic trick gives  $-uv+u\vp+v^2+g$, 
to which we apply the coordinate transformation $v=\bar v+\bar u$, $u= 2\bar u$, yielding 
${\bar v }^2-{\bar u}^2+2 \bar u\vp+g$, so $f$ is also 
stably equivalent to $-{\bar u}^2+2 \bar u\vp+g$; this is the obvious 
way to treat this case. 
\begin{cor} 
Every polynomial is stably equivalent to a polynomial of degree three. 
\end{cor}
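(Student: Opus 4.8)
The plan is to lower the degree one monomial at a time, using the $k=1$ case of Lemma~\ref{trick} (so that $g+m\vp$ is stably equivalent to $-uv+u\vp+vm+g$), and to keep track of progress with the weight
\[
W(f)=\sum_{\mu}\max\bigl(\deg\mu-3,\,0\bigr),
\]
where the sum runs over the monomials $\mu$ occurring in $f$ with nonzero coefficient. Since $W(f)=0$ exactly when $\deg f\leq 3$, it suffices to show that whenever $W(f)>0$ the polynomial $f$ is stably equivalent to a polynomial $h$ with $W(h)<W(f)$; an induction on $W(f)$, together with the transitivity of stable equivalence, then gives the statement (reading "degree three" as "degree at most three").

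For the reduction step, suppose $W(f)>0$ and choose a monomial $cx^a$ of $f$ of maximal degree $d\geq 4$. Because $d\geq 4$ we may split the exponent as $a=b+b'$ with $|b|=2$ and $|b'|=d-2$; this is the only point where the bound $d\geq 4$ rather than $d\geq 3$ is used, since both parts must have degree at least $2$. Put $m=cx^b$, $\vp=x^{b'}$ and $g=f-cx^a$, so $f=g+m\vp$, and by Lemma~\ref{trick} the function $f$ is stably equivalent to $h=-uv+u\vp+vm+g$ in two new variables $u,v$. The three monomials produced have degrees $\deg(-uv)=2$, $\deg(vm)=1+|b|=3$ and $\deg(u\vp)=1+|b'|=d-1$; since each of them involves $u$ or $v$ they do not combine with the monomials of $g$, while $g$ has lost precisely the degree-$d$ monomial $cx^a$. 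Hence
\[
W(h)=W(g)+(d-4)=\bigl(W(f)-(d-3)\bigr)+(d-4)=W(f)-1 .
\]

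What is left is routine. The coefficient $c$ is absorbed into $m$, which Lemma~\ref{trick} allows since $m$ may be any function; and for $k=1$ the rational factor $\frac{v^k-\vp^k}{v-\vp}$ in the proof of that lemma is simply $1$, so the coordinate change used there is a mere shift of $u$ and $v$ and the argument is valid in every characteristic. Each reduction step uses up a fresh pair of variables, but $W(f)$ is a finite non-negative integer that strictly decreases, so the induction terminates. I do not foresee any real obstacle: the one thing that needs attention is the bookkeeping of the degrees of the monomials created by each suspension, which is exactly what the weight $W$ is built to handle.
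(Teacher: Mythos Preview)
Your proof is correct and follows the same idea as the paper's: factor a high-degree monomial as $m\vp$ and apply the $k=1$ case of Lemma~\ref{trick} to trade it for three lower-degree terms. The only difference is bookkeeping---the paper splits the degree almost evenly ($e-1\le d\le e$) and inducts implicitly on the maximal degree, while you peel off a factor of degree~$2$ and track the weight $W$; both choices work.
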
 
\begin{proof} 
A product $m\vp^k$ with $\deg m = d$, $\deg \vp=e$ 
can be replaced by 
$-uv+u\vp+v^km$ with summands of degrees $2$, $e+1$ and $d+k$. The condition
that each of these degrees is
less than  $d+ke$ is that  $d>1$  or $k>1$ and that $e>1$. A monomial of degree
at least 4 can always be written as a product $m\vp$ with $d,e\geq2$ and therefore be replaced by 
monomials of lower degree (this might not be the most efficient way to reduce the degree).  
\end{proof}

\begin{remark}\label{twoexample} 
If  $f=g+m_1\vp^{k_1}+m_2\vp^{k_2}$, we can apply our basic 
trick twice to get  
\[ 
-u_1v_1-u_2v_2+(u_1+u_2)\vp+m_1v_1^{k_1}+m_2v_2^{k_2}+g 
\] 
after which we make $u_1+u_2$ into a new variable, say by replacing 
$u_2$ by $u_2-u_1$, giving 
\[ 
-u_1v_1+u_1v_2-u_2v_2+u_2\vp+m_1v_1^{k_1}+m_2v_2^{k_2}+g \;.
\] 
This procedure generalises to more terms. 
\end{remark} 
 
\subsection{Luengo's example}
\begin{example}\label{luengo} 
The function
$f=x^9+y(xy^3+z^4)^2+y^{10}$ \cite{lu}
with $\mu(f)=547$  
has non-smooth $\mu$-constant stratum.
It is stably equivalent to the non-degenerate function
\[-uv+u(xy^3+z^4)+yv^2+y^{10}\;.\] 

The stratum Luengo computed is in fact the $\mu^*$-constant stratum
$S_{\mu^*}$ \cite{lu}.
Recall that
$\mu^*$ is the sequence of the Milnor numbers of repeated hyperplane
sections \cite{Teiss}. If we also know that the topological type is constant
in a $\mu$-constant deformation, or the multiplicity, then it follows that
the stratum $S_{\mu^*}$ is the whole $\mu$-constant stratum
$S_{\mu}$. It is known that  for the function $f$ it 
is least an irreducible 
component of $S_{\mu}$ \cite{st}.
The stratum $S_{\mu^*}$  has a quadratic singularity.
When Luengo did his computation
\cite{luc} on  an IBM 370 with a memory of 8256 K, he could not determine 
the decisive polynomial explicitly. Maybe nowadays it is possible, but it is clear that
the result is too big to be of any use. 

Starting from the first order deformation
\[
f+2(a_{60}x^5+a_{51}x^4z+a_{42}x^3z^2+a_{33}x^2z^3)(xy^3+z^4)
\]
the obstruction to lift it to second order (see \cite{st}) is given by
\[
a_{60}a_{33}+a_{51}a_{42}=0\;.
\]
Some $1$-parameter families are easy to describe.
We can even make the  $\mu$-constant
deformation $f+a_{60}x^5(xy^3+z^4)$ 
stably non-degenerate by the transformation $u\mapsto u-a_{60}x^5$, resulting 
in $ -uv+u(xy^3+z^4)+yv^2+a_{60}vx^5+y^{10}$. 

Furthermore we have
\[
f+a_{42}x^3z^2(xy^3+z^4)-a_{42}^2x^7y^2
\]
and
\[
f+a_{33}x^2z^3(xy^3+z^4)-a_{33}^2x^5y^2z^2+a_{33}^3x^7yz \;.
\]
For the deformation in the $a_{51}$-direction we computed up to order 30 in the deformation variable, but we have not been able to find
a $\mu$-constant deformation.

The $a_{33}$-deformation is stably equivalent to
\[
-uv +x^9+y^{10}+ u(xy^3+z^4)+yv^2+2a_{33}vx^2z^3-a_{33}^2x^5y^2z^2 +a_{33}^3x^7yz\;.
\]
By changing the coefficient of $x^5y^2z^2$ the Milnor number drops to 533. 
The polynomial degenerates on the face $\Delta$ which is the intersection
of the facets with normalised weight vectors $(5,4,1,1,1)/9$
 and $(128,99,26,22,23)/220$ respectively
and $f_\Delta=u(xy^3+z^4)+yv^2+2a_{33}vx^2z^3-a_{33}^2x^5y^2z^2$.
We can write this expression as symmetric determinant:
\[
f_\Delta=
\begin{vmatrix}
-u & -v & a_{33}x^2z \\
-v & xy^2 & -z^2 \\
a_{33}x^2z  & -z^2 & -y
\end{vmatrix}\;.
\]
This hypersurface is singular on the codimension 3 space defined by the
$2\times2$-minors of the above matrix. It is reducible, with one component
in $z=y=v=0$ and the other having as normalisation  the cone over
the rational normal curve of degree 4; it can be parametrised as
$z=-st^3$, $y=-y^4$, $x=s^4$, $v=-a_{33}s^{11}t^5$ and 
$u=-a_{33}^2s^{18}q^2$. Note also that the weight of the monomial 
$x^7yz$ is larger than 1 for both weight vectors.
We conjecture that already this polynomial provides an example
of a function which is  stably degenerate.
\end{example} 
  
More generally, we make the
following conjecture.

\begin{conj}\label{mu-conj}
A general function on a non-smooth  $\mu$-constant stratum
is stably degenerate.
\end{conj}

In particular this would prove
\begin{conj}
There exist a  stably degenerate function, which is a
$\mu$-constant deformation of a 
Newton non-degenerate function.
\end{conj}
The existence of such a function
implies that the question of stable non-degeneracy cannot be decided
with invariants depending only on the embedded topological type.

Besides the fact that I do not know what to do in the example above,
the heuristic for conjecture \ref{mu-conj} is the following.
The $\mu$-constant stratum
has very complicated equations and in fact it is not known in a single case
how to write down a general function on the stratum, whereas the
Newton diagram seems to be a relatively simple, combinatorial object.
Furthermore, the non-degenerate functions with the same Newton diagram 
as a generic function
on the stratum should dominate the $\mu$-constant stratum and this fits
badly with the non-smoothness. However this idea does not lead to a proof,
as the coordinate transformations involved need not extend to the
original function (as for the function  $g$ in Example \ref{adp} below).

\subsection{du Plessis' examples}
In \cite{dp} Andrew du Plessis gave 
in a systematic way examples of hypersurfaces of degree $d$ in 
$\P^n$, whose
singularities are not versally deformed by the family $H_d(n)$ of all hypersurfaces
of degree $d$ in $\P^n$. Then at the corresponding point the stratum of hypersurfaces with exactly these 
singularities can be smooth of dimension larger than the expected dimension or it can be singular.
A classical example of the first case is Segre's family of curves of degree $6k$ 
of the form $(f_{3m})^2+(f_{2m})^3$ with $6k^2$ cusps, see  \cite[VIII.5]{Zar}.

If the stratum is singular, we obtain
by adding a suitable form of degree $d+1$ to the equation of the hypersurface
a superisolated singularity with non-smooth $\mu^*$-constant stratum
(and probably also  non-smooth $\mu$-constant stratum, but this has to
be proved, as in \cite{st} for the case of Luengo's example). As 
communicated by du Plessis, the smallest example constructed this way
is the following, with 
$d=3$ and $n=7$. 

\begin{example}\label{adp}
Consider the following function, cf. \cite[Examples 2.7]{dp}:
\[
f=f_3+x_0^4=  x_0x_1^2+x_2(x_2^2+x_3^2-x_0^2)+x_4(x_4^2-x_5^2+x_0^2)
  +x_6^3+x_7^3+x_0^4
\]
with $\mu= 272$; the  projective  hypersurface  $f_3=0$
has four $D_4$-singularities. The hyperplane section $\{x_1=0\}$
belongs to a stratum in $H_3(6)$ with larger than expected dimension.

The $\mu^*$-constant stratum of $f$ is singular with quadratic singularity:
the obstruction to lift the first order deformation
\[
f_3+2x_1(a_{67}x_6x_7+a_{57}x_5x_7+a_{56}x_5x_6+a_{57}x_3x_6+a_{37}x_3x_7+a_{35}x_3x_5)
\]
is $a_{67}a_{35}+a_{57}a_{36}+a_{56}a_{37}=0$. In the chart $x_0=1$ one completes 
the square $\big(x_1+(a_{67}x_6x_7+a_{57}x_5x_7+a_{56}x_5x_6+a_{57}x_3x_6+a_{37}x_3x_7+a_{35}x_3x_5)\big)^2$ 
and the obstruction to find an equivalent polynomial of degree three
is the coefficient of $x_3x_5x_6x_7$.
Also here some 1-parameter $\mu$-constant deformations are easy to write down:
\begin{align*}
f&+2a_{67}x_1x_6x_7\\
f&+2a_{36}x_1x_3x_6+a_{36}^2x_6^2\\
f&+2a_{35}x_1x_3x_5+a_{35}^2x_0(x_3^2+x_5^2-x_0^2)
\end{align*}
and similar ones obtained by symmetry,
but a general deformation is not known explicitly.

The $a_{67 }$-deformation has non-degenerate Newton diagram.
We now show that for a fixed value of $a_{35}$ the function
is equivalent to an inner non-degenerate function.
As $x_6$ and $x_7$ do not occur 
in the $a_{35 }$-deformation we might as well leave them out.
Consider therefore the polynomial $g$ given by
\[
x_0x_1^2+x_2(x_2^2+x_3^2-x_0^2)+x_4(x_4^2-x_5^2+x_0^2)+x_0^4
+2a_{35}x_1x_3x_5+a_{35}^2x_0(x_3^2+x_5^2-x_0^2)
\]
with $\mu(g)=68$.
It degenerates on $\Delta$ with 
$g_\Delta=x_0x_1^2+2a_{35}x_1x_3x_5+a_{35}^2x_0(x_3^2+x_5^2-x_0^2)$.
We  write $g_\Delta$ as determinant:
\[
g_\Delta=
-a_{35}^2
\begin{vmatrix}
x_0 & x_3& x_5 \\
x_3 & x_0 & -x_1/a_{35} \\
x_5  & -x_1/a_{35} & x_0
\end{vmatrix}\;.
\]
In this case the singular locus is reducible and consists of four linear 
spaces, which we can move into coordinate subspaces by
a coordinate transformation:
\begin{align*}
x_0&\mapsto x_0+x_3+x_5-x_1/a_{35}\\
x_1&\mapsto -a_{35}x_0+a_{35}x_3+a_{35}x_5+x_1\\
x_3&\mapsto x_0+x_3-x_5+x_1/a_{35}\\
x_5&\mapsto x_0-x_3+x_5+x_1/a_{35}
\end{align*}
It transforms the original function into
\begin{multline*}
16a_{35}(x_0x_1x_3+x_0x_1x_5+x_1x_3x_5-a_{35}x_0x_3x_5)\\
+x_2^3-4x_2(x_0+x_3)(x_5-x_1/a_{35})+x_4^3+4x_4(x_0+x_5)(x_3-x_1/a_{35})\\
+(x_0+x_3+x_5-x_1/a_{35})^4\;.
\end{multline*}
Now $\mu=\nu=68$. The polynomial still degenerates on some faces
in coordinate hyperplanes. The Newton diagram has 803
compact faces (computed with {\sc G\'ermenes} \cite{Ger}).
It can be checked that the polynomial is inner non-degenerate.
Presumably it can be made non-degenerate with the basic trick.
Note that the coordinate transformation only works for $a_ {35}\neq0$.
\end{example}

%

\subsection{Small examples}
Motivated by the above examples we search for a simpler example,
with low Milnor number. We start from a quasi-homogeneous singularity
$f_\delta$
with one-dimensional singular locus, which is 
generically reduced (to get a small example);
denote by  $\Sigma$ the reduced singular locus.
There should be no coordinate transformation
which  moves $\Sigma$ into a coordinate hyperplane.
Therefore $\Sigma$ should be irreducible.
Moreover, it should not be a complete intersection, where
our methods apply, see Example \ref{ex40}.

The condition that $f_\delta$ is singular along $\Sigma=V(I)$ is that
$f_\delta \in \int I$, where 
$\int I$ is the \textit{primitive ideal} \cite{se, pe}:
\[ 
\textstyle \int  I = \big\{ g\in K[[x_1,\dots,x_n]] \mid 
          (\pabi g/1,\dots, \pabi g/n)\subset I\big\}\;. 
\] 
The terminology is from Pellikaan \cite{pe}. 

Interesting examples can be found in the work of De Jong and Van Straten 
on rational quadruple points \cite[Proposition 1.8]{td}. 
The easiest example is the following. 

\begin{example}\label{ex23}
Let $\Sigma=V(I)$ be the monomial  
curve $(t^3,t^4,t^5)$. Its ideal is given by the minors of  
a $2\times3$ matrix. 
We define a function $f_\delta$ with $\Sigma$ as singular locus
by adding one row to the matrix to get 
the following symmetric $3\times 3$ determinant: 
\[ 
f_\delta=- 
\begin{vmatrix} 
x & y & z \\ 
y& z & x^2 \\ 
z & x^2 & xy 
\end{vmatrix}\;. 
\] 
All $2\times2$ minors lie in the ideal $I$, because they vanish on the curve
$(t^3,t^4,t^5)$,
so by the product rule the partial derivatives of $f_\delta$ lie also 
in $I$, showing that $f_\delta\in \int I$. 
 
We find isolated singularities in three related series by adding suitable 
monomials: 
\begin{alignat*}{3} 
f_{7+3k}&=f_\delta+x^{k}&&=x^5+xy^3+z^3-3x^2yz&&+ x^{k}\;,\\ 
f_{8+3k}&=f_\delta+x^{k-1}y&&=x^5+xy^3+z^3-3x^2yz&&+ x^{k-1}y\;,\\ 
f_{9+3k}&=f_\delta+x^{k-1}z&&=x^5+xy^3+z^3-3x^2yz&&+ x^{k-1}z\;. 
\end{alignat*} 
The lower index denotes the Milnor number. We can write it as $\mu=7+v$, 
where $v$ denotes the weight of the added monomial  (using the 
weights $3,4,5$). The smallest example is $f_{23}$.
The functions $f_\mu$ degenerate by construction on the face 
$\delta$ of the Newton diagram with vertices $(5,0,0)$, $(1,3,0)$ and $(0,0,3)$.

To determine the resolution graph we look at $f_\delta$ in the chart 
$x=1$. It is given by $1+y^3+z^3-3yz=0$, on which we have the 
$\mathbb{Z}_3$-action $(1,y,z)\mapsto(1,\ep y,\ep^2 z)$. We find that
the singularity $f_{7+v}$ has  the same resolution graph  
as the maximal elliptic singularity $z^2+y^3+y^2x^8+x^{9+v}$.
It has $Z^2=-1$, there is a cycle of $v-15$ rational curves, all but one 
having self intersection $-2$, and at the only $(-3)$-curve a chain of three 
$(-2)$-curves is attached.  One has $p_g(f_{7+v})=2$, whereas the 
maximal elliptic singularity has $p_g=4$.
\end{example}

\begin{conj} 
For every function $\tilde f$, stably equivalent to the function 
$f_{7+v}$ of Example \ref{ex23} with $v>15$, one has
$\mu(f)=\mu(\tilde f)>\nu(\tilde f)$. In particular, 
the function $f_{7+v}$  is stably degenerate. 
\end{conj}

If we change the coefficients in  the matrix defining
$f_\delta$ the function will define a non-degenerate function. Every
transformation I tried can also be done for the non-degenerate function,
leading to the same Newton diagram. These transformations involve 
somehow the generators of the ideal $I$, but by changing their coefficients
the ideal will become a complete intersection.
This does however not exclude 
the existence of a very strange coordinate transformation, which 
does the trick.

\begin{example}\label{ex40}
If we take only two generators of the ideal $I$ of the monomial curve
$(t^3,t^4,t^5)$ we get as reduced singular locus the union of this  
curve and the $z$-axis; it is the complete intersection
$x^3-yz=xz-y^2=0$.
We take the function 
\[
g_\delta=(x^3-yz)(xz-y^2)\;.
\]
We have $\mu(g_\delta+z^k)=6k+16$ for $k\geq 4$.
The function $g_\delta+z^k$ is stably equivalent to the non-degenerate function
\[
-uv +u(xz-y^2) + v(x^3-yz)+z^k\;.
\]
\end{example}

\begin{example}\label{ex48}
If we take $h_\delta \in I^2$, with still the same ideal $I$, we can
apply the basic trick as first step. 
Take the function 
\[
h_\delta=(x^3-yz)^2+(xz-y^2)(yx^2-z^2)\;.
\]
The reduced singular locus consists of the monomial  
curve $(t^3,t^4,t^5)$ and the $y$-axis, and is not a complete intersection.
We have $\mu(h_\delta+y^k)=23+5k$ for $k\geq5$, but $\nu(h_\delta+y^k)=41+k$.

We apply the basic trick to $h=h_\delta+y^k$ and get
\[
\tilde h= -uv-w^2+u(xz-y^2)+v(yx^2-z^2)+2w(x^3-yz)+y^k\;.
\]
This polynomial degenerates on the four-dimensional face $\delta'$ where
$\tilde h_{\delta'}=u(xz-y^2)+v(yx^2-z^2)+2w(x^3-yz)$.
Indeed, the function $h_\delta$ involves all five monomials of the given degree
and a general  non-degenerate function can be written as 
$H_\delta=(ax^3-byz)^2+(cxz-dy^2)(eyx^2-fz^2)$. Therefore the same
type of transformation can be applied to $H_\delta+y^k$, leading to the same Newton diagram.
 
As we have a relation $\sum r_if_i$ between the generators $f_i$ of $I$
we get by deriving a relation between the partial derivatives of the $f_i$,
holding modulo $I$. This can be written in terms of  the parameter $t$
in the parametrisation $(t^3,t^4,t^5)$ of $\Sigma$. If $(u,v,2w)$ is 
a multiple of the relation vector we get a non-trivial solution.
Note that we can write $\tilde h_{\delta'}$ as determinant:
\[
\tilde h_{\delta'}=
\begin{vmatrix} 
x & y & z \\ 
y& z & x^2 \\ 
v & -2w & u 
\end{vmatrix}\;. 
\]
This shows that the singular locus of $\tilde h_{\delta'}$ is not contained in the 
coordinate hyperplanes.

As there seems no way to use the fact that there are relations between 
the generators $f_i$ of $I$, we conjecture that also $h_\delta+y^k$ is  stably 
degenerate. 
\end{example}

It does not suffice that $\Sigma$ is not a complete intersection, as the 
following example shows.
 
\begin{example}\label{exkkk}
Consider
\[ 
f_\delta=- 
\begin{vmatrix} 
x & y & z \\ 
y& z & x \\ 
z & x & y 
\end{vmatrix}\;. 
\] 
Then $f_\delta=x^3+y^3+z^3-3xyz$, which is a product of three linear 
factors,  and $f_\delta+l^k$ for a 
general linear function $l$ (a coordinate function will do) is equivalent to
a function of type $T_{k,k,k}= x_1^k+x_2^k+x_3^k+a x_1x_2x_3$
\cite[15.1]{AGV-I}, so a simple coordinate transformation makes the function
non-degenerate. But it is even possible to make it 
non-degenerate after stabilisation, keeping the original 
$(x,y,z)$-coordinates.
The equation has the form 
$l_1l_2l_3+l^k$. 
We apply the basic trick, first once: 
\[ 
-u_1v_1+u_1l_1+v_1l_2l_3 +l^k
\] 
and then once again: 
\[ 
-u_1v_1-u_2v_2+u_1l_1+u_2l_2+v_1v_2l_3+l^k\;. 
\] 
\end{example}
 
\begin{example}\label{ex103}
We give a non-trivial example with $\Sigma$ a complete intersection.
Consider the curve with parametrisation $(t^4,t^5,t^6)$ and equations
$y^2-xz=z^2-x^3=0$. 
This is  the simple complete intersection
curve $W_8$ in Giusti's notation, see \cite[9.8]{AGV-I}.
We take as $f_\delta$ a rather general element in the 
square of the ideal.
Let
\[
f=x(z^2-x^3)^2-z(z^2-x^3)(y^2-zx)+x^2(y^2-zx)^2+xy^5
\]
with $\mu(f)=103$.
The (reduced) singular locus of 
$f_\delta=x^7+x^2y^4-2x^4z^2+2xz^4-y^2z^3-x^3y^2z$ 
consists of $W_8$ and the
$y$-axis. We apply the basic trick several times and simplify.
The result can be seen directly:
a function of the form 
$g=\alpha \vp^2 + \beta \vp\psi + \gamma \psi^2$ is
stably equivalent to
$-uv-tw+u\vp+t\psi+\alpha v^2 + \beta vw + \gamma w^2$, as
the last expression is equal to
\[g
-(u-\beta w-\alpha (v+\vp))(v-\vp)-(w-\psi)(t-\gamma(w+\psi)-\beta \vp)\;.\]
Therefore $f$ is stably equivalent to 
\[
\tilde f=-uv-tw+u(z^2-x^3)+t(y^2-zx)+x v^2 - z vw + x^2 w^2+xy^5\;.
\]
\end{example}

The examples above show that there is no easy criterion for a function
to be stably equivalent to a non-degenerate function.
Our strategy is to remove $f_\Delta$, if  $f$ degenerates on $\Delta$.
In order to increase the Newton number in this way the function
$f_\Delta$ should have a specific form, which is different from
the generic function with the same support, as in Examples \ref{luengo},
\ref{exkkk} and \ref{ex103}.
In Examples \ref{ex23} and \ref{ex48} the only specific structure
is the existence of relations between the generators of the ideal
of the singular locus, but that does not seem to help.
 
\section{Irreducible plane curve singularities} 
It is well known that the only non-degenerate irreducible
plane curve singularities are those with one characteristic pair
($g=1$ in the notation below).
This follows from Newton's method to find a Newton-Puiseux 
series, see e.g. \cite[8.3]{B-K}; indeed the Newton polygon
was introduced by Newton for this purpose.
In this section we give evidence that all irreducible plane curve 
singularities are stably non-degenerate (in characteristic zero).

We describe equations for irreducible plane curve singularities 
following Teissier \cite{Te}, see also  \cite{C-N}.  
We look at algebroid curves over an algebraically closed field 
$K$ of characteristic zero. The basic invariant is the semi-group.

Let $S=\langle\bar\beta_0, \dots,\bar\beta_g\rangle$ be the 
semigroup of the curve. Define numbers $n_i$ by  
$e_i=\gcd(\bar\beta_0, \dots,\bar\beta_i)$ 
and $e_{i-1}=n_ie_i$. 
The condition that $S$ comes from a plane curve  
singularity, is that $n_i\bar \beta_i\in  
\langle\bar\beta_0, \dots,\bar\beta_{i-1}\rangle$ 
and $n_i\bar \beta_i < \bar \beta_{i+1}$.


Teissier showed that every plane curve singularity 
with semigroup $S$ occurs in the positive weight 
part of versal  deformation of the monomial curve $C_S$ 
with the same  semigroup $S$. Embed $C_S$ 
in $K^{g+1}$ by $u_i=t^{\bar \beta_i}$. 
Write  
\[ 
n_i\bar\beta_i= l_0^{(i)}\bar\beta_0+ l_1^{(i)}\bar\beta_1 
+ \dots +  l_{i-1}^{(i)}\bar\beta_{i-1} 
\;. 
\] 
The curve  $C_S$ is  a complete intersection with  
equations  
\begin{align*} 
f_1&=u_1^{n_1}-u_0^{ l_0^{(1)}}=0\\ 
f_2&=u_2^{n_2}-u_0^{ l_0^{(2)}}u_1^{ l_1^{(2)}}=0\\ 
&\;\;\vdots\\ 
f_g&=u_g^{n_g}-u_0^{ l_0^{(g)}}\dots u_{g-1}^{ l_{g-1}^{(g)}}=0 
\end{align*} 
A particular simple deformation of positive weight is given by 
$f_i+\ep u_{i+1}$, $i<g$, and we may even take $\ep=1$. It is then 
easy to eliminate the $u_i$ with $i\geq 2$ to obtain an  
equation of a plane curve. Cassou-Nogu\`es \cite{C-N} has shown that one 
can write the whole equisingular deformation of this 
particular curve as $\wt f_i+ u_{i+1}$, where $\wt f_i$ 
only depends on the coordinates $u_0,\dots,u_i$, so it 
is possible to do the same elimination for the whole stratum. 
However, as  the curve is no longer quasi-homogeneous 
it is not clear whether every plane curve occurs in this 
family. 
 
The easiest elimination occurs when $l_j^{(i)}=0$ for all 
$j\geq 2$ and all $i$. Such semigroups exist for all $g$. They 
can be constructed inductively. Given $\langle\bar\beta_0, \dots,\bar\beta_{g-1}\rangle$ with 
$\gcd(\bar\beta_0, \dots,\bar\beta_{g-1})=1$ and such that $l_j^{(i)}=0$ for $j\geq 2$, 
take a semigroup $\langle n_g\bar\beta_0, \dots,n_g\bar\beta_{g-1},\bar\beta_g\rangle$ with 
$\gcd(n_g,\bar\beta_g)=1$,   $\bar\beta_g> n_{g-1}n_g\bar\beta_{g-1}$ 
and $\bar\beta_g\in \langle \bar\beta_0, \bar\beta_1\rangle$. 
 
\begin{conj} 
The deformed curve $f_i+u_{i+1}$, with  $l_j^{(i)}=0$ for all 
$j\geq 2$, is stably equivalent to a  non-degenerate singularity. 
\end{conj} 
 
\begin{proof}[``Proof"] 
In this case the equation of the plane curve is 
\[ 
\left(\dots\Big((u_1^{n_1}-u_0^{ l_0^{(1)}})^{n_2}-u_0^{ l_0^{(2)}}u_1^{ l_1^{(2)}}\Big)^{n_3} \dots
-u_0^{ l_0^{(g-1)}}u_1^{ l_1^{(g-1)}} \right)^{n_g}-u_0^{ l_0^{(g)}}u_1^{ l_1^{(g)}}=0 \;.
\] 
This is of the form $\vp_g^{n_g}-u_0^{ l_0^{(g)}}u_1^{ l_1^{(g)}}=0$, and 
$\vp_g=\vp_{g-1}^{n_{g-1}}-u_0^{ l_0^{(g-1)}}u_1^{ l_1^{(g-1)}}$ is itself of the same form. The 
principal part is a complete $n_g$-th power. 
We apply the basic trick (Lemma \ref{trick}) and write 
\[ 
-v_gw_g + v_g\vp_g+w_g^{n_g} -u_0^{ l_0^{(g)}}u_1^{ l_1^{(g)}}\;. 
\] 
Here $ v_g\vp_g 
=v_g\left(\vp_{g-1}^{n_{g-1}}-u_0^{ l_0^{(g-1)}}u_1^{ l_1^{(g-1)}}\right)$, so  
we apply the basic trick once more, now to $v_g\vp_{g-1}^{n_{g-1}}$, and obtain 
\[ 
-v_gw_g -v_{g-1}w_{g-1} + v_{g-1}\vp_{g-1}+v_gw_{g-1}^{n_{g-1}}+w_g^{n_g}
-v_gu_0^{ l_0^{(g-1)}}u_1^{ l_1^{(g-1)}} -u_0^{ l_0^{(g)}}u_1^{ l_1^{(g)}}\;. 
\] 
The next step takes care of $v_{g-1}\vp_{g-1}$ and we continue 
inductively. The final result is 
\begin{multline*} 
-v_gw_g -\dots - v_{2}w_{2} +v_2(u_1^{n_1}-u_0^{l_0(1)}) 
+v_3w_{2}^{n_{2}}+\dots +w_g^{n_g}\\{}-v_3u_0^{ l_0^{(2)}}u_1^{ l_1^{(2)}}-\dots- 
v_gu_0^{ l_0^{(g-1)}}u_1^{ l_1^{(g-1)}} -u_0^{ l_0^{(g)}}u_1^{ l_1^{(g)}}\;. 
\end{multline*} 

It remains to show that the final function is non-degenerate. We will not do this,
leaving this as conjecture. In fact, 
we conjecture that all facets of the 
Newton diagram are simplices, implying  non-degeneracy. 
We checked this in the case $g=3$. There are eight monomials, $v_3w_3$, $v_2w_2$, $v_2u_1^{n_1}$, 
$v_2u_0^{l_0^{(1)}}$, $v_3w_2^{n_2}$, $w_3^{n_3}$, 
$v_3u_0^{ l_0^{(2)}}u_1^{ l_1^{(2)}}$ and $u_0^{ l_0^{(3)}}u_1^{ l_1^{(3)}}$. The facets containing both 
$v_2u_1^{n_1}$ and $v_2u_0^{l_0(1)}$ are rather easy to describe, but the
remaining 
facets, on which only one of  $v_2u_1^{n_1}$ and $v_2u_0^{l_0(1)}$ lies, are more difficult, as they depend 
on the values of $l_k^{(i)}$. Each such a facet contains  
exactly six points and is therefore a simplex.
\end{proof}

\begin{remark} 
Without the assumption $l_j^{(i)}=0$ for all 
$j\geq 2$ the situation is more complicated and we  only give 
the case $g=4$. 
The equation is now 
\begin{multline*} 
\left(\Big( 
(u_1^{n_1}-u_0^{ l_0^{(1)}})^{n_2}- u_1^{ l_1^{(2)}}u_0^{ l_0^{(2)}} 
\Big)^{n_3} - 
(u_1^{n_1}-u_0^{ l_0^{(1)}})^{l_2^{(3)}}u_1^{ l_1^{(3)}} u_0^{ l_0^{(3)}} 
\right)^{n_4}-\\ \Big( 
(u_1^{n_1}-u_0^{ l_0^{(1)}})^{n_2}- u_1^{ l_1^{(2)}}u_0^{ l_0^{(2)}} 
\Big)^{l_3^{(4)}}(u_1^{n_1}-u_0^{ l_0^{(1)}})^{l_2^{(4)}}u_1^{ l_1^{(4)}}u_0^{ l_0^{(4)}}\;. 
\end{multline*} 
We start with one application of the basic trick (Lemma \ref{trick}) 
to get 
\begin{multline*} 
-v_4w_4+w_4^{n_4}+v_4\Big( 
(u_1^{n_1}-u_0^{ l_0^{(1)}})^{n_2}- u_1^{ l_1^{(2)}}u_0^{ l_0^{(2)}} 
\Big)^{n_3} - 
v_4(u_1^{n_1}-u_0^{ l_0^{(1)}})^{l_2^{(3)}}u_1^{ l_1^{(3)}} u_0^{ l_0^{(3)}} 
-\\ \Big( 
(u_1^{n_1}-u_0^{ l_0^{(1)}})^{n_2}- u_1^{ l_1^{(2)}}u_0^{ l_0^{(2)}} 
\Big)^{l_3^{(4)}}(u_1^{n_1}-u_0^{ l_0^{(1)}})^{l_2^{(4)}}u_1^{ l_1^{(4)}}u_0^{ l_0^{(4)}}\;. 
\end{multline*} 
Let $\vp_3=(u_1^{n_1}-u_0^{ l_0^{(1)}})^{n_2}- u_1^{ l_1^{(2)}}u_0^{ l_0^{(2)}}$. Then we have two terms involving a power of $\vp_3$, 
so we apply the basic trick twice, followed by a coordinate transformation 
as in Remark \ref{twoexample} to get 
\begin{multline*} 
-v_4w_4-v_{3,1}w_{3,1}+v_{3,1}w_{3,2}-v_{3,2}w_{3,2}+w_4^{n_4}+v_4w_{3,1}^{n_3} 
+v_{3,2}(u_1^{n_1}-u_0^{ l_0^{(1)}})^{n_2}- v_{3,2}u_1^{ l_1^{(2)}}u_0^{ l_0^{(2)}} \\ 
-v_4(u_1^{n_1}-u_0^{ l_0^{(1)}})^{l_2^{(3)}}u_1^{ l_1^{(3)}} u_0^{ l_0^{(3)}} 
-w_{3,2}^{l_3^{(4)}}(u_1^{n_1}-u_0^{ l_0^{(1)}})^{l_2^{(4)}}u_1^{ l_1^{(4)}}u_0^{ l_0^{(4)}}\;. 
\end{multline*} 
Finally we introduce six new variables to handle the powers 
of $\vp_2=u_1^{n_1}-u_0^{ l_0^{(1)}}$. 
\begin{multline*} 
-v_4w_4-v_{3,1}w_{3,1}+(v_{3,1}-v_{3,2})w_{3,2} 
-v_{2,1}w_{2,1}-v_{2,2}w_{2,2}+(v_{2,1}+v_{2,2}-v_{2,3})w_{2,3}\\ 
+w_4^{n_4}+v_4w_{3,1}^{n_3}+v_{3,2}w_{2,1}^{n_2} 
+v_{2,3}(u_1^{n_1}-u_0^{ l_0^{(1)}}) 
- v_{3,2}u_1^{ l_1^{(2)}}u_0^{ l_0^{(2)}} \\ 
-v_4w_{2,2}^{l_2^{(3)}}u_1^{ l_1^{(3)}} u_0^{ l_0^{(3)}} 
-w_{3,2}^{l_3^{(4)}}w_{2,3}^{l_2^{(4)}}u_1^{ l_1^{(4)}}u_0^{ l_0^{(4)}}\;. 
\end{multline*} 
\end{remark}

\end{document}